    \newtheorem{theorem}{Theorem}[section]
    \newtheorem{proposition}[theorem]{Proposition}
    \newtheorem{lemma}[theorem]{Lemma}
    \newtheorem{corollary}[theorem]{Corollary}
    \numberwithin{equation}{section}
    \numberwithin{theorem}{section}
    \renewcommand{\(}{\left(} 
    \newcommand{\nn}{\nonumber}
    \newcommand{\bb}[1]{{\mathbb #1}}
    \renewcommand{\epsilon}{\varepsilon}
    \newcommand{\id}{{1 \mskip -5mu {\rm I}}}
    \newcommand{\varsh}{\mathop{\rm sinh}\nolimits}
    \newcommand{\varch}{\mathop{\rm cosh}\nolimits}
    \newcommand{\vararch}{\mathop{\rm arcosh}\nolimits}
    \newcommand{\e}{\mbox{e}}
\begin{document}
    
    \title {On the density of the winding number of planar Brownian motion}
    \author {Stella Brassesco 
    \and Silvana C. Garc\'\i a Pire
    }
     \date{}
     \maketitle

    \begin{abstract}
    We obtain a formula for the density  $f(\theta, t)$ of the winding number
 of a  planar Brownian motion $Z_t$ around the origin.
 From this formula we deduce an expansion for  $f(\log(\sqrt t)\,\theta\,,\,t)$
 in inverse powers of $\log \sqrt {t}$ and  $(1+\theta^2)^{1/2}$  which in particular yields
 the corrections of any order to Spitzer's asymptotic law (\cite{spitzer}).
 We also obtain an expansion for $f(\theta,t)$ in inverse powers of $\log \sqrt {t}$, which yields
 precise asymptotics  as $t \to \infty$ for a local limit theorem for the windings.

    \end{abstract}
    \noindent {\small Keywords: planar Brownian motion, winding number, transition density,
    Spitzer's law, local limit theorem,  
    asymptotic expansions}
    
      \section {Introduction}
      
      In his celebrated paper about 2--dimensional  Brownian motion,  F. Spitzer considered the transition
      probabilities  of  $\Theta_t$, its continuous winding number around the origin,
        and showed in particular  that the limiting distribution 
      of $\frac{\Theta_t}{\log \sqrt t}$ is the Cauchy distribution:
      \begin{equation}
      \label{spitzer}
     \underset{t\to \infty}{\lim}\bb P\big(\frac{\Theta_t}{\log \sqrt t}\le \alpha\big)
      =\frac 1\pi \int_{-\infty}^{\alpha}\frac{d\theta}{1+\theta^2}.
      \end{equation}
      
      Several proofs of this result have since then  been given, see for instance page 43 
       of the book \cite{w} by L.G.C. Rogers and D. Williams
      and the discussion and references therein.
       Extensions to more general situations are  known as well,
        for instance those considered by J.Pitman and M. Yor in  \cite{pyor0}, \cite {pyor} and \cite{pyor2}
      in the frame of asymptotic laws  for planar Brownian motion.

    More recently, V. Bentkus, G.Pap and M. Yor  in 
   (\cite{papyor2} and \cite{papyor1}) obtained by Fourier methods  an expansion 
    of the distribution function  of $\Theta_t$, that yields detailed asymptotics,
     both in $t$ and $\alpha$, as they go to $\infty$. 
    
    F. Delbaen, E. Kowalski and A. Nikeghbali proved in \cite{dkn} 
      (also using Fourier methods) the following  local limit theorem for $\Theta_t$: if $\alpha<\beta\in \bb R$,  
     \begin{equation}
      \label{dkn}
 \underset{t\to \infty}{\lim}\log{\sqrt t}\,\bb P\big(\alpha<\Theta_t < \beta \big)
      =\frac {\beta-\alpha}{\pi}.
      \end{equation}

   In the present work,  we obtain a formula for the density $f(\theta, t;\rho)$ of $\Theta _t$
    when the initial condition $Z_0=\rho\neq 0,\rho\in \bb R$, that follows by integrating
    an expression for  the joint density $p(r,\theta, t;\rho)$ of $(|Z_t|\,,\Theta_t)$ that 
     appears in \cite {sb} (and follows as well from results in \cite{yor}, see also the comments there for older references). We compute $f$ in the next section. Let us mention that in  Chapter V of \cite{mansuyor},
      R. Mansuy and M. Yor discuss some representations for the distribution of the winding number
       of the Brownian lace of lengh $t$. 
     
     The expression  for the density  $f$  is given in terms of an integral
including a couple  of fractions. In the last section, we expand 
these fractions in inverse powers of $\log\sqrt t$ and  obtain  two asymptotic expansions,
 after term by term integration.  
    The first one is an expansion for $f(\log{\sqrt t}\,\theta, t;\rho)$  very close to that given in 
    \cite{papyor1}. The difference is that the dependence  of the coefficients on $\theta$ is
    given more explicitly,  and that  
     we obtain also accurate  estimates on the behaviour in $t$ of those coefficients.
       A more precise comparison is discussed in  Remark 4  
    after the statement of the corresponding result (Theorem \ref{th1}). 
    The second expansion is for  $f(\theta, t;1)$ in inverse powers of $\log{\sqrt t}$,
     that yields  in particular the corrections of any order to \eqref{dkn}.
      The  result is stated as Theorem \ref{th2}.

       \bigskip

       \section { A formula for the density }
       We consider a planar Brownian motion $Z_t$ starting at $z_0\neq 0$, 
       $\Theta_t$ a continuous determination of its argument and 
       $R_t=|Z_t|$. Let us call $p$ the joint  density of   $\Theta_t$ and  
       $R_t$, 
       \begin{equation}
       \nonumber
       p(r,\theta,t;\rho,\alpha) \,rdrd\theta=\bb P(R_t\in dr,
       \Theta_t\in d\theta\big\vert R_0=\rho,\Theta_0=\alpha)
       \end{equation}
       Since $p$ is clearly a function of $\theta-\alpha$,
        it suffices to consider $\alpha=0$, what we do in the sequel.
        The following expression for $p$ was deduced in \cite{sb}
        (see also \cite{yor}), 
        
    \begin{equation}
    \label{ptr}
       p(r,\theta,t;\rho)=\frac{1}{\pi\,t}\exp{(-\frac{r^2+\rho^2}{2t})}
       \int^{\infty}_0\,\cos(\nu\theta)\,I_\nu(\frac{\rho r}{t})\,d\nu
       \end{equation}
       Integration in $r$ of the above formula yields an expression for 
       the density 
       of $\Theta_t$, $f(\theta,t;\rho) \,d\theta=P(
       \Theta_t\in d\theta\big\vert R_0=\rho)$.

       Denote by $ \id _A$ the indicator function of the set $A$. We have then: 
       
       \begin{proposition}
       \label{p1}
        The density $f$  of $\Theta_t$ is given by 
       \begin{align}
\nonumber
       f(\theta,t;\rho) =\,&\frac{\rho \,
       \e^{-\frac{\rho^2}{4 t}(1-\cos 2\theta)}}{\sqrt{2\pi t}}\,
         \cos{\theta}\,
         \id _{(-\pi/2,\pi/2)}(\theta)
         \\\label{f1}
         +\,\frac{\rho \,\e^{-\frac{\rho^2}{4 t}}}{2\pi\,\sqrt{2\pi t}}
         &\int^{\infty}_0 \!
         \e^{-\frac{\rho^2}{4 t}\cosh \omega }
         \sinh(\omega/2) 
         \\ \nonumber 
          &\qquad \times  \big(\frac{\omega/2}{(\omega/2)^2\,+(\theta+\pi/2)^2}
         \,+\,\frac{\omega/2}{(\omega/2)^2\,+(\theta-\pi/2)^2}\big)\,d\omega
         \end{align} 
         \end{proposition}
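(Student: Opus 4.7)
I would compute $f(\theta,t;\rho) = \int_0^\infty p(r,\theta,t;\rho)\,r\,dr$ starting from~\eqref{ptr}. My plan is to first evaluate the $\nu$-integral in~\eqref{ptr} in closed form by substituting the Schl\"afli--Basset representation
\begin{equation*}
 I_\nu(z) = \frac{1}{\pi}\int_0^\pi e^{z\cos\phi}\cos(\nu\phi)\,d\phi - \frac{\sin(\nu\pi)}{\pi}\int_0^\infty e^{-z\cosh u - \nu u}\,du
\end{equation*}
and applying Fubini. The cosine piece of $I_\nu$ combines with $\cos(\nu\theta)$ via the distributional identity $\int_0^\infty\cos(\nu\theta)\cos(\nu\phi)\,d\nu = \frac{\pi}{2}(\delta(\theta-\phi)+\delta(\theta+\phi))$ to give, after the outer $\phi$-integration on $(0,\pi)$, a sharply supported term proportional to $\exp(-(r^2+\rho^2)/(2t)+\rho r\cos\theta/t)\,\mathbf{1}_{|\theta|<\pi}$. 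The line piece combines with $\cos(\nu\theta)$ via $\int_0^\infty \sin(\nu\pi)\cos(\nu\theta)e^{-\nu u}\,d\nu = \frac{1}{2}\bigl[\frac{\pi+\theta}{(\pi+\theta)^2+u^2}+\frac{\pi-\theta}{(\pi-\theta)^2+u^2}\bigr]$ to give a Cauchy-kernel integral in $u$. This produces a closed form for $p(r,\theta,t;\rho)$.

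Next I would multiply by $r$ and integrate over $r\in(0,\infty)$. Completing the square in each piece reduces the $r$-integral to the Gaussian moment $\int_0^\infty r e^{-(r-c)^2/(2t)}\,dr = t e^{-c^2/(2t)} + c\sqrt{2\pi t}\,\Phi(c/\sqrt t)$ (with $\Phi$ the standard normal distribution function), with $c=\rho\cos\theta$ in the first piece and $c=-\rho\cosh u$ in the second. Since $\int_0^\infty[\cdots]\,du = \pi\,\mathbf{1}_{|\theta|<\pi}$, the bulk contributions $te^{-c^2/(2t)}$ cancel exactly across the two pieces. What remains is a term proportional to $\cos\theta\,\Phi(\rho\cos\theta/\sqrt t)\,\mathbf{1}_{|\theta|<\pi}$ plus a residual $u$-integral involving the Mills-ratio factor $e^{\rho^2\cosh^2 u/(2t)}(1-\Phi(\rho\cosh u/\sqrt t))$ against the Cauchy kernels.

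To recast this intermediate expression into the asserted form I would then use: $(i)$ the split $\Phi(a) = \mathbf{1}_{a>0} - \operatorname{sgn}(a)(1-\Phi(|a|))$, which peels off the clean indicator $\mathbf{1}_{|\theta|<\pi/2}$ from the $\Phi$-term; $(ii)$ the Mills-type identity $e^{a^2/2}(1-\Phi(a)) = (2\pi)^{-1/2}\int_0^\infty e^{-aw-w^2/2}\,dw$ (valid for all real $a$) to trade the $(1-\Phi)$ factor and the leftover $\Phi$-piece for an auxiliary Gaussian integral; and $(iii)$ a change of variables (for instance $\omega=2u$) together with the hyperbolic identity $\cosh\omega = 2\cosh^2(\omega/2)-1$ to produce the prefactor $e^{-\rho^2/(4t)}$, the exponential $e^{-\rho^2\cosh\omega/(4t)}$, the weight $\sinh(\omega/2)$, and ultimately the half-angle-shifted Cauchy kernels $\frac{\omega/2}{(\omega/2)^2+(\theta\pm\pi/2)^2}$. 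The main obstacle is precisely this reshuffling in step~$(iii)$: the residual pieces (Mills-ratio integrand, leftover indicator on $\pi/2<|\theta|<\pi$, auxiliary Gaussians, Cauchy kernels with arguments $\pi\pm\theta$) must combine exactly into a single well-packaged integral with the precise prefactor $\frac{\rho e^{-\rho^2/(4t)}}{2\pi\sqrt{2\pi t}}$ and the claimed kernel structure. As an independent check on the first term, the identity $\frac{\rho\cos\theta\,e^{-\rho^2\sin^2\theta/(2t)}}{\sqrt{2\pi t}}\mathbf{1}_{|\theta|<\pi/2}\,d\theta = \mathbb{P}(\Theta_t\in d\theta,\,\tau>t)$ with $\tau := \inf\{s:\Re Z_s = 0\}$ follows directly from the reflection principle for 1-D Brownian motion starting at $\rho$.
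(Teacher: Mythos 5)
Your first two steps are sound and, in fact, follow a genuinely different route from the paper: you evaluate the $\nu$--integral in \eqref{ptr} first (recovering the known closed form for $p$ with the Cauchy kernels $\frac{\pi\pm\theta}{(\pi\pm\theta)^2+u^2}$), and only then integrate in $r$; the paper does the opposite, integrating in $r$ first via formula 6.618--4 of \cite{gr}, which produces the half--order shifted Bessel functions $I_{\frac{\nu\pm1}{2}}(\frac{\rho^2}{4t})$, and then handles the $\nu$--integral through the Schl\"afli representation and Fourier inversion. Carried out carefully, your computation does give a correct intermediate identity, roughly $f=\frac{\rho\cos\theta}{\sqrt{2\pi t}}\,\e^{-\frac{\rho^2\sin^2\theta}{2t}}\,\Phi\big(\tfrac{\rho\cos\theta}{\sqrt t}\big)\id_{(-\pi,\pi)}(\theta)+\frac{\rho}{\pi\sqrt{2\pi t}}\int_0^\infty\cosh u\,\e^{\frac{\rho^2\sinh^2u}{2t}}\big(1-\Phi(\tfrac{\rho\cosh u}{\sqrt t})\big)\frac12\big(\frac{\pi+\theta}{(\pi+\theta)^2+u^2}+\frac{\pi-\theta}{(\pi-\theta)^2+u^2}\big)du$, and your reflection--principle check of the first term of \eqref{f1} is correct (though it only checks that term).

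The genuine gap is your step $(iii)$. The formula you reach and the asserted formula \eqref{f1} have structurally different kernels: in yours the integration variable $u$ sits in the denominator as the Poisson "width" and the translates are $\pi\pm\theta$, whereas in \eqref{f1} the integration variable $\omega/2$ is the numerator and the translates are $\theta\pm\pi/2$; moreover \eqref{f1} carries the weight $\e^{-\frac{\rho^2}{4t}\cosh\omega}\sinh(\omega/2)$ in place of your Mills--ratio factor. No change of variables such as $\omega=2u$, combined with $\cosh\omega=2\cosh^2(\omega/2)-1$ and the Mills identity, maps one integrand into the other pointwise: the equality of the two expressions is an identity between two different integral representations of the same function, i.e.\ it is essentially equivalent to the proposition itself, and you explicitly leave it as ``the main obstacle'' without a mechanism to prove it. In the paper the $\pm\pi/2$ shifts and the $\sinh(\omega/2)$ weight arise precisely from doing the $r$--integral first: the Schl\"afli representation of $I_{\frac{\nu\pm1}{2}}$ contributes the factor $\sin(\pi(\nu\pm1)/2)=\mp\cos(\nu\pi/2)$, whose Laplace transform in $\nu$ against $\cos(\nu\theta)$ creates the kernels $\frac{\omega/2}{(\omega/2)^2+(\theta\pm\pi/2)^2}$ (the computation of $S_2,S_3$). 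Your route bypasses this mechanism, so to close the gap you would have to return to the $\nu$--Fourier/Laplace side (or simply adopt the paper's order of integration), not merely ``reshuffle'' with the identities you list. A secondary point: your use of the distributional identity $\int_0^\infty\cos(\nu\theta)\cos(\nu\phi)\,d\nu=\frac\pi2(\delta(\theta-\phi)+\delta(\theta+\phi))$ and the accompanying Fubini interchanges are only formal, since the $\nu$--integrals are not absolutely convergent; the paper justifies the analogous step for $S_1$ by an explicit $L^1$ bound obtained from two integrations by parts, and your argument would need a comparable justification.
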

         \medskip 
         Before proving the previous proposition, let us show another
           formula for  the density $f$,
that will prove more suitable to obtaining asymptotic expansions (as $t\to \infty$). 
It follows directly from the change of  variables $\frac{\rho^2}{4 t}\cosh \omega \to z$
in the integral above. 
         
\begin{corollary} 
The following formula for $f$ holds: 
 \begin{align}
\nonumber
       f(\theta, & t; \, \rho) =\frac{\rho \,
       \e^{-\frac{\rho^2}{4 t}(1-\cos 2\theta)}}{\sqrt{2\pi t}}\,
         \cos{\theta}\,
         \id _{(-\pi/2,\pi/2)}(\theta)
         \\\label{f2} 
           +\,&\frac{1}{2\pi\sqrt\pi}\,\int ^{\infty}_{\rho^2/4t}\,
\frac{\e^{-(z+\rho^2/4t)}}{\sqrt{z+\rho^2/4t}}\,\times
 \\ \nonumber
 &\big(\frac{\frac 12\vararch(\frac{4tz}{\rho^2})}{(\frac 12\vararch(\frac{4tz}{\rho^2}))^2\,+(\theta+\pi/2)^2}
         \,+\,\frac{\frac 12\vararch(\frac{4tz}{\rho^2})}{(\frac 12\vararch(\frac{4tz}{\rho^2}))^2\,+(\theta-\pi/2)^2}\big)\,dz
         \end{align} 
\end{corollary}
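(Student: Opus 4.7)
\medskip
\noindent\textbf{Proof plan.} The first summand in \eqref{f2} is identical to the first summand of \eqref{f1}, so the task reduces to rewriting the integral in \eqref{f1} via the substitution
\[
z \;=\; \frac{\rho^2}{4t}\cosh\omega, \qquad \omega \ge 0.
\]
Since $\cosh$ is increasing on $[0,\infty)$, this is a bijection from $[0,\infty)$ onto $[\rho^2/4t,\infty)$, which at once accounts for the new lower limit of integration. The inverse map is $\omega = \vararch\bigl(4tz/\rho^2\bigr)$, so the two fractions in the $\omega$-integrand pass directly to the two fractions appearing in \eqref{f2}.

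The only remaining step is to verify that the product $e^{-\frac{\rho^2}{4t}\cosh\omega}\,\sinh(\omega/2)\,d\omega$, together with the prefactor $\rho e^{-\rho^2/4t}/(2\pi\sqrt{2\pi t})$, becomes precisely
\[
\frac{1}{2\pi\sqrt{\pi}}\,\frac{e^{-(z+\rho^2/4t)}}{\sqrt{z+\rho^2/4t}}\,dz.
\]
The plan is: differentiate the substitution to get $dz = \tfrac{\rho^2}{4t}\sinh\omega\,d\omega$; write $\sinh\omega = 2\sinh(\omega/2)\cosh(\omega/2)$; apply the half-angle identities
\[
\sinh^2(\omega/2) = \tfrac{1}{2}(\cosh\omega - 1), \qquad \cosh^2(\omega/2) = \tfrac{1}{2}(\cosh\omega + 1)
\]
to express $\cosh(\omega/2) = \sqrt{(4tz+\rho^2)/(2\rho^2)}$; and cancel the factor $\sinh(\omega/2)$ in the numerator of the integrand against the same factor arising from $d\omega/dz = 1/\bigl(\tfrac{\rho^2}{4t}\sinh\omega\bigr)$. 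After this cancellation one is left with $1/\cosh(\omega/2)$, i.e.\ a factor $\sqrt{2\rho^2/(4tz+\rho^2)} = \rho/(\sqrt{t}\sqrt{z+\rho^2/4t})$ up to constants. The exponential simply becomes $e^{-z}$, which combined with the $e^{-\rho^2/4t}$ prefactor gives $e^{-(z+\rho^2/4t)}$.

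This leaves a purely bookkeeping task of collecting the constants $\rho$, $\sqrt{t}$, $2$, $\pi$ that appear, and checking they combine to give the $1/(2\pi\sqrt{\pi})$ written in \eqref{f2}. No nontrivial obstacle is expected: the content of the corollary is the algebraic miracle that the half-angle identity turns $\sinh(\omega/2)\,d\omega$ at the right exponential weight into a factor involving $\sqrt{z+\rho^2/4t}$ in the denominator, which is exactly what makes the resulting expression amenable to Laplace/Watson type expansions around the lower endpoint $z = \rho^2/4t$, as needed for the asymptotic analysis in the next section.
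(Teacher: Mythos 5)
Your proposal is correct and is exactly the paper's argument: the paper proves the corollary by the single observation that it ``follows directly from the change of variables $\frac{\rho^2}{4t}\cosh\omega \to z$'' in \eqref{f1}, which is precisely your substitution, and your half-angle bookkeeping does check out (one finds $\sinh(\omega/2)\,d\omega = \sqrt{2t}\,\rho^{-1}(z+\rho^2/4t)^{-1/2}\,dz$, so the prefactor $\rho\,\e^{-\rho^2/4t}/(2\pi\sqrt{2\pi t})$ indeed becomes $\e^{-(z+\rho^2/4t)}/(2\pi\sqrt\pi\,\sqrt{z+\rho^2/4t})$).
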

\medskip 

\noindent {\it Proof of Proposition \ref{p1}}
Integrate by parts once, recall that $2\,I'_{\nu}(x)=I_{\nu-1}(x)+
          I_{\nu+1}(x)$ and use formula 6.618--4 of \cite{gr} to
           compute
           \begin{multline} 
           \nonumber
           \int^{\infty}_{0}\e^{-\frac{r^2}{2t}}I_{\nu}(\frac{\rho r}{t})\,rdr
           =\rho\, \int^{\infty}_{0}\e^{-\frac{r^2}{2t}}
           I'_{\nu}(\frac{\rho r}{t})\,dr\,=
           \\
           \frac{\rho}{2}
           \int^{\infty}_{0}\e^{-\frac{r^2}{2t}}
           \big[I_{\nu-1}(\frac{\rho r}{t})\,+\,I_{\nu+1}
           (\frac{\rho r}{t})\big]dr
           =\frac{\rho\,\sqrt{2\pi\,t}}{4}\,\e^{\frac{\rho^2}{4t}}
           \big[ I_{\frac{\nu-1}{2}}(\frac{\rho^2}{4t})+
          I_{\frac{\nu+1}{2}}(\frac{\rho^2}{4t})\big]
          \end{multline} 
With the aid of this formula we can integrate \eqref{ptr}, after interchanging the order of the integrals, 
obtaining         

           \begin{align}
\nonumber
           f(\theta,t;\rho)&=\int^{\infty}_{0}p(r,\theta,t;\rho)\,r\,dr 
\\
\label{fourier}&=\,
           \frac{\rho\,\e^{-\frac{\rho^2}{4t}}}{2\sqrt{2\pi\,t}}
           \int^{\infty}_{0}\,\cos(\nu\theta)
           \big[ I_{\frac{\nu-1}{2}}(\frac{\rho^2}{4t})+
          I_{\frac{\nu+1}{2}}(\frac{\rho^2}{4t})\big]\,d\nu
           \end{align} 
Recall next the integral representation  for the Bessel  function $I_\nu$ (formula 8.431--5 of \cite{gr}):
\begin{equation}
           \nonumber
           I_\nu (x)=\frac 1\pi \int_0^{\pi}\!
           \e^{x\cos\omega}\cos(\nu\omega)\,d\omega\,-\,\frac{\sin(\nu\pi)}{\pi}
           \int _0^{\infty}\! \e^{-x\cosh \omega-\nu\omega}\,d\omega
           \end{equation}
Substitution of this last formula in the integral 
in  \eqref{fourier} yields

           \begin{multline} 
\label{s123} \int_0^{\infty}\cos(\nu\theta)
           \big[ I_{\frac{\nu-1}{2}}(\frac{\rho^2}{4t})+
          I_{\frac{\nu+1}{2}}(\frac{\rho^2}{4t})\big]\,d\nu\,=
          \\
          \frac 1\pi\int^{\infty}_0\!\cos(\nu\theta)\int^\pi_0
         \e^{\frac{\rho^2}{4t}\cos\omega}\,\big[\cos((\nu-1)\omega/2)+
          \cos((\nu+1)\omega/2)\big]\,d\omega\,d\nu 
          \\
          -\,\frac 1\pi \int^{\infty}_0\!\cos(\nu\theta)
           \sin(\pi (\nu-1)/2)
          \int^{\infty}_0 \!\e^{-\frac{\rho^2}{4t} 
          \cosh(\omega)-(\nu-1)\omega/2}\,d\omega\,d\nu
          \\
          -\,\frac 1\pi \int^{\infty}_0\!\cos(\nu\theta)
           \sin(\pi (\nu+1)/2)
          \int^{\infty}_0 \!
           \e^{-\frac{\rho^2}{4t} \cosh(\omega)-(\nu+1)\omega/2}\,d\omega\,d\nu \\
    =S_1\,+\,S_2\,+\,S_3
          \end{multline}
To compute the first integral $S_1$, use that $\cos((\nu-1)\omega/2)+
\cos((\nu+1)\omega/2)=2\cos(\nu\omega/2)\cos(\omega/2)$,
 and  recall that the integrand
  is an even function of $\omega$ and $\nu$ to write:  
\begin{multline}
S_1= \frac{1}{2\pi}  \int^\infty_{-\infty} 
 \cos(\nu\theta)\, \int^{\frac{\pi}{2}}_{-\frac{\pi}{2}}\!
2\,\e^{\frac{\rho^2}{4t}\cos 2\omega}
\cos \omega\,\cos(\nu\omega)\,  d\omega d\nu
 \\ \label{s1}
=\, 2\, \e^{\frac{\rho^2}{4t}\cos 2\theta}\cos \theta\,
\id _{(-\frac{\pi}{2},\frac{\pi}{2})}(\theta).
\end{multline} 
The last identity follows from the Fourier inversion theorem, which applies since the inner integral $I(\nu)$
is an $L^1$ function. This in turn can be seen from the fact that both  $|I(\nu)|$ and $|\nu^2 I(\nu)|$ are uniformly bounded. The last statement can be checked integrating by parts twice, using that $2\,\e^{\frac{\rho^2}{4t}\cos 2\omega}\cos \omega$ vanishes at $\pm \frac\pi2$.

To compute $S_2$, change the order of the integrals, recall that\\
$\sin(\pi(\nu-1)/2)=-\cos(\nu\pi/2)$ and integrate
\begin{multline*}
\int^{\infty}_{0}\!\cos{(\nu\theta)}\,\cos{(\nu\pi/2)}\e^{-\frac{\nu y}{2}}\,d\nu=\\
\frac12\big(
\frac{y/2}{(y/2)^2+(\theta+\pi/2)^2}+\frac{y/2}{(y/2)^2+(\theta-\pi/2)^2}\big)
\end{multline*}
The computation of  $S_3$ is similar, and we obtain finally
\begin{align}
\nonumber
S_2 + S_3 = \frac{1}{\pi}
\int^{\infty}_{0}\!&\e^{-\frac{\rho^2}{4t}\cosh \omega}\sinh (\omega/2)
\\ \label{s23}
&\times\Big( \frac{\omega/2}{(\omega/2)^2+(\theta+\pi/2)^2}\,+\,
\frac{\omega/2}{(\omega/2)^2+(\theta-\pi/2)^2}\Big)\,d\omega
\end{align}
From \eqref{s123}, \eqref{s1} and this last equation yield \eqref{f1}.
\qed
\medskip

Despite formulae \eqref{f1} and \eqref{f2} may appear complicated,
we can see that they give information on the behaviour of $\Theta_t$.
In particular, it is not hard to deduce 
for instance  detailed asymptotics as $t \to \infty$. 
We state and prove two precise results in the following section.
 \section { Two asymptotic results }
 \label{s3}

\begin{theorem}
\label{th1}
The density $f(\theta,t;\rho)$ admits the following expansion:
for any natural number $N\ge 0$,  $t>2$ and $\delta\in (0,\frac 12)$,  
\begin{equation}
\label{exp1}
\log (\sqrt t)\,f(\theta\log (\sqrt t),t;\rho)
=\frac{1}{2\pi\sqrt{\pi}}
\sum_{n=0}^{N}\,\frac{(-1)^n\,A_n(\theta)\,C_n(t;\rho)}
{2^{n-1}(1+\theta^2)^{\frac{n+1}{2}}\big(\log (\sqrt t)\big)^n}
\,+\,R_N
\end{equation}
The coefficients satisfy 
\begin{align}
 \label{cnt}
 A_n(\theta)=\frac{\sum_{\underset {k\le n+1}{ k\, even}}
\binom{n+1}{k}(-1)^{\frac k2}\theta^k}
{(1+\theta^2)^{\frac{n+1}{2}}}\,,
\qquad
 C_n(t;\rho)=c_n+ o_n(t^{-\delta})
\end{align}
where the $c_n$ are constants given by  
\begin{equation}
\label{cn}
c_n=
\sum_{\underset {k\le n} {k\, even}}
\!\binom{n}{k}(-1)^{\frac k2}\pi^k\!\int ^{\infty}_{0}\,\frac{\e^{-z}}{\sqrt{z}}\,
\big(\log(8z/\rho^2)\big)^{n-k}\,dz.
\end{equation}

The remainder terms 
$R_N$ satisfy that for some positive constants $k_N$,
\begin{equation}
\label{RN}
|R_N|\le 
\frac{k_N}{(\log(\sqrt t))^{N+1}\,
(1+\theta^2)^{\frac N2+1}}
\end{equation}
\end{theorem}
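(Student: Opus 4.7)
The plan is to work from the integral representation \eqref{f2}. Setting $L = \log\sqrt{t}$, the non--integral boundary term, evaluated at $\theta$ replaced by $L\theta$, vanishes except on $|\theta|<\pi/(2L)$, where it is bounded by $\rho/\sqrt{2\pi t}$; on that tiny window $1+L^2\theta^2$ stays bounded, so the term is easily absorbed into the claimed remainder once $t$ is large. The heart of the matter is the expansion of the integral, and the crucial algebraic step is to combine the two fractions into a single complex expression: writing $A = A(z,t):=\frac12\vararch(4tz/\rho^2)$ and $w=A/L+i\theta$, a partial--fraction identity gives
\begin{equation*}
\frac{A}{A^2+(L\theta+\pi/2)^2}+\frac{A}{A^2+(L\theta-\pi/2)^2}=\frac{1}{L}\,\mathrm{Re}\,\frac{2w}{w^2+\pi^2/(4L^2)}.
\end{equation*}

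Next I expand this in two nested geometric series. First the outer one, $\frac{2w}{w^2+\pi^2/(4L^2)}=\frac{2}{w}\sum_{k\ge 0}(-\pi^2/(4L^2 w^2))^k$. Second, using the elementary asymptotic $\vararch(x)=\log(2x)+O(x^{-2})$, one has $A=L+h(z)+O((\rho^2/(tz))^2)$ with $h(z):=\tfrac12\log(8z/\rho^2)$, so $w=w_0+\eta(z,t)/L$ about $w_0:=1+i\theta$, and $\frac{1}{w^{2k+1}}=\frac{1}{w_0^{2k+1}}\sum_{m\ge 0}\binom{2k+m}{m}(-\eta(z,t)/(Lw_0))^m$. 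Multiplying the two series, collecting terms of total order $L^{-n}$, and taking the real part via the identity
\begin{equation*}
\mathrm{Re}\,(1+i\theta)^{-(n+1)}=\frac{\sum_{k\le n+1,\,k\,\text{even}}\binom{n+1}{k}(-1)^{k/2}\theta^k}{(1+\theta^2)^{n+1}}=\frac{A_n(\theta)}{(1+\theta^2)^{(n+1)/2}}
\end{equation*}
(obtained from the binomial expansion of $(1-i\theta)^{n+1}$) produces the structure of \eqref{exp1}. Integrating the resulting polynomial in $\eta(z,t)$ against $\e^{-(z+\rho^2/4t)}/\sqrt{z+\rho^2/4t}$, and replacing $\eta$ by $h$ and $\rho^2/4t$ by $0$ with controlled errors, yields the constants $c_n$ of \eqref{cn}; the powers of $\pi/2$ and $1/2$ coming from the two series combine by a routine binomial calculation into the $2^{n-1}$ in the denominator of \eqref{exp1}.

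For the remainder, I would split the $z$--integration at, say, $z_0=\rho^2\,t^{-1+2\delta}$. The piece $z<z_0$ has length $O(t^{-1+2\delta})$ and a bounded integrand (the two fractions are bounded by $(|L\theta|+\pi/2)^{-1}$, safely absorbed in the $(1+\theta^2)^{-1/2}$ budget), so it fits the bound \eqref{RN}. On the main range $z\ge z_0$, the tails of the two geometric series give errors of order $L^{-(N+1)}$ times $z$--integrable expressions in $|h(z)|^{N+1}$ and in the $\vararch$ remainder; the factor $|w_0|^{-(N+1)}=(1+\theta^2)^{-(N+1)/2}$, together with one extra factor $|w|^{-1}\le |w_0|^{-1}$, supplies the $(1+\theta^2)^{-N/2-1}$ decay claimed in \eqref{RN}. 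The $o_n(t^{-\delta})$ in $C_n(t;\rho)$ comes from replacing $\rho^2/(4t)$ by $0$ in the weight and from the $O((\rho^2/(tz))^2)$ error in $\vararch$.

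The main obstacle, as I see it, is making \eqref{RN} genuinely uniform in $\theta\in\bb R$. The use of $w_0=1+i\theta$, which always satisfies $|w_0|\ge 1$, is precisely what makes this possible: it both produces decay in $\theta$ at rate $(1+\theta^2)^{-1/2}$ per order and ensures convergence of the inner geometric series $\sum_m(-\eta(z,t)/(Lw_0))^m$ uniformly in $\theta$. A secondary but delicate point is the uniformity in $\theta$ of the $o_n(t^{-\delta})$ correction to $C_n(t;\rho)$: one must verify that the $\vararch$ remainder, pulled through the expansion, does not couple badly to the factors $(1+\theta^2)^{-n/2}$, and again the complex bookkeeping with $w_0$ delivers this.
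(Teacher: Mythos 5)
Your route is, in substance, the paper's own: combining the two fractions into the real part of $\frac{2w}{w^2+\pi^2/(4L^2)}$ with $w=(1+bx)+i\theta$, $x=1/L$, and expanding in powers of $1/L$ about $w_0=1+i\theta$ is exactly what Lemmas \ref{logint}--\ref{fraclem} accomplish (your identity $\Re\,(1+i\theta)^{-(n+1)}=A_n(\theta)\,(1+\theta^2)^{-(n+1)/2}$ is the paper's $\cos\big((n+1)\beta\big)$ factor), and your identification of $c_n$ as the $t\to\infty$ limit obtained by replacing $b$ by $b_0=\frac12\log(8z/\rho^2)$, with rate governed by $|b-b_0|\le\rho^2/(8zt)$, matches \eqref{defb0}--\eqref{ini}.

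The genuine gap is in the remainder control where $A=\frac12\vararch(4tz/\rho^2)$, i.e.\ $1+bx$, is close to $0$ (the lower end of the $z$--integration) while $|\theta|$ is small; this is precisely where the uniformity in $\theta$ of \eqref{RN} must be earned, and two of your stated estimates fail there. First, on $z<z_0$ the fractions are not bounded by $(|L\theta|+\pi/2)^{-1}$: for $L\theta$ near $\pi/2$ the fraction $\frac{A}{A^2+(L\theta-\pi/2)^2}$ reaches size $\frac{1}{2|L\theta-\pi/2|}$, and at $L\theta=\pi/2$ the integrand behaves like $1/A(z)$, which is unbounded (though integrable) as $z\downarrow\rho^2/4t$; the smallness of this piece must come from an integrability argument --- the paper's change of variables $y=\vararch(4tz/\rho^2)$ in \eqref{zpeq}--\eqref{up} --- not from boundedness. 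Second, $|w|^{-1}\le|w_0|^{-1}$ is false: near the endpoint $\eta/L\to-1$, so $w\to i\theta$ and $|w|$ can be arbitrarily small while $|w_0|\ge1$; hence the extra $\theta$--decay needed for \eqref{RN} cannot be extracted this way, and the paper instead proves the weighted bound \eqref{QN}, whose factor $\id_{|\theta|\le 20}/(1+bx)^2$ records exactly this degeneracy, before treating $z\in(\rho^2/4t,\rho^2/4\sqrt t)$ separately. Relatedly, your infinite series are invoked outside their regions of convergence: the outer geometric series needs $|w|>\pi/(2L)$ (false near the endpoint for small $|\theta|$), and the inner binomial series needs $|\eta|<L|w_0|$ (false once $8z/\rho^2$ exceeds a power of $t$, e.g.\ $8z/\rho^2\ge t$ when $\theta=0$); a correct implementation must use finite expansions with explicit remainders, which is what Lemma \ref{gexp} supplies. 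With these repairs your argument becomes the paper's proof; as written, the key bound \eqref{RN} is not established.
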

\bigskip
Let us make some observations  on the previous result. 

\newpage

\noindent {\bf Remarks}
\begin{itemize}
\item[(1)] The dependence on the initial condition $\rho$  is made explicit in the coefficients 
$c_n$, but the constants $k_N$,  and  $o_n(t^{-\delta})$ are not uniform in $\rho$.
 We could have taken $\rho=1$ as usual, but preferred to keep it explicit,
as the dependence may be traced back in order to investigate the behaviour in $\rho$ also.
 \item[(2)] For $N$ even, $A_N\sim \frac{1}{(1+\theta^2)^{1/2}}\mbox{ as } \theta \to \infty$, 
while for $N$ odd,  $A_N\sim 1 \mbox{ as } \theta \to \infty$. Then the order in $\theta$ of the coefficient
$\frac{A_N(\theta)}{(1+\theta^2)^{\frac{N+1}{2}}}$ of 
$\frac{1}{(\log \sqrt t)^N}$ in the expansion is the same as that of $R_N$ for $N$ even. 
\item[(3)] As $c_0=\sqrt \pi$ and $A_0(\theta)=\frac{1}{(1+\theta^2)^{1/2}}$, 
the first term yields precisely Spitzer's result  \eqref{spitzer}. 
The $c_n$ can be expressed in terms of 
derivatives of the Gamma function $\Gamma$, using the formula 
\begin{equation}
\label{Ga}
\int_0^{\infty}\frac{\e^{-z}}{\sqrt z}\big(\log(z)\big )^n\, dz\,=\,\frac{d^n}{dt^n}\Gamma(t)\big|_{t=\frac 12}
\end{equation}
(see for instance formula 4.358.5 on page 578 of \cite {gr}).

\item[(4)]As mentioned in the introduction,  V. Bentkus, G.Pap and M. Yor  in 
(\cite {papyor1} and \cite{papyor2}) obtained an expansion for $f$ (in the case $\rho=1$),
by different methods. 
The expression for the coefficients in $\theta$ in \eqref{exp1} is slightly more
 explicit here than the one they obtain. It is not difficult to see that the first
  three terms coincide with those they compute. 
We  also provide accurate  estimates on the rate of convergence to $0$ of $C_n-c_n$ as $t\to \infty$. 
\end{itemize}
\medskip

As  the density $f$ of $\Theta_t$ is a function of $\frac{\rho}{\sqrt t}$, we  consider $\rho=1$
in the next result, and omit the reference to the initial condition, keeping the notation $f(\theta,t)$.

\begin{theorem}
\label{th2}
For any natural number $N\ge 0$ and $t>2$, the following expansion holds:  
\begin{equation}
\label{exp2}
\log (\sqrt t)\,f(\theta,t)
=\frac{1}{2\pi\sqrt{\pi}}
\sum_{n=0}^{N}\,\frac{g_n(\theta)}{\big(\log (\sqrt t)\big)^n}
\,+\,E_N.
\end{equation}
The  $g_n(\theta)$ are polynomials in $\theta$ given by 
\begin{multline}
\label{gn}
g_n(\theta)=\\(-1)^n
\sum_{\underset{k\le n} {k\, even}}\!\binom{n}{k}(-1)^{\frac k2}
\big((\frac\pi 2+\theta)^k +(\frac\pi 2-\theta)^k\big)\!
\int ^{\infty}_{0}\frac{\e^{-z}}{\sqrt{z}}\,
\big(\log(8z)\big)^{n-k}\,dz
\end{multline}
and the remainder $E_N$ satisfies
\begin{equation} 
\label{En}
\big|E_N\big|\le \frac{S_{N+2}(|\theta|)}{\big(\log (\sqrt t)\big)^{N+1}}.
\end{equation}
$S_{N+2}$ above is a polynomial  of degree $N+2$, whose coefficients may depend on $N$. 
\end{theorem}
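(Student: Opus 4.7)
The strategy is to start from formula \eqref{f2} specialised to $\rho = 1$ and expand the fractions $G(\phi, w) := w/(w^2 + \phi^2)$ in inverse powers of $L := \log\sqrt{t}$, then integrate term by term. The first (indicator-supported) term of \eqref{f2} is $O(\e^{-1/(4t)}/\sqrt{t})$ and, after multiplication by $L$, is absorbed into $E_N$. The substantive analysis concerns
\[
I(\theta, t) := \int_{1/(4t)}^{\infty} \frac{\e^{-(z+1/(4t))}}{\sqrt{z+1/(4t)}}\,\bigl[\,G(\theta+\pi/2, w) + G(\theta-\pi/2, w)\,\bigr]\, dz, \qquad w := \tfrac{1}{2}\vararch(4tz).
\]

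The first ingredient is the large-$x$ asymptotics $\vararch(x) = \log(2x) + O(x^{-2})$, yielding $w = L + a(z) + \eta(z,t)$ with $a(z) := \tfrac{1}{2}\log(8z)$ and $|\eta(z,t)| = O(t^{-2})$ uniformly on $[z_0,\infty)$ for any fixed $z_0 > 0$. The second ingredient is the identity $G(\phi,w) = \mathrm{Re}(w - i\phi)^{-1}$, which permits the Neumann expansion
\[
\frac{1}{w - i\phi} = \sum_{n=0}^{N} \frac{(-1)^n (a - i\phi + \eta)^n}{L^{n+1}} + \frac{(-1)^{N+1} (a - i\phi + \eta)^{N+1}}{L^{N+1} (w - i\phi)}.
\]
Taking real parts, summing the two contributions for $\phi = \theta \pm \pi/2$, and using the even-$k$ identity $(\theta+\pi/2)^k + (\theta-\pi/2)^k = (\pi/2+\theta)^k + (\pi/2-\theta)^k$, one obtains an expansion whose $n$-th term exhibits precisely the combinatorial structure of the summand in \eqref{gn}.

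Substitution into $I(\theta,t)$ and termwise integration then produce the principal sum: the weight $\e^{-(z+1/(4t))}/\sqrt{z+1/(4t)}$ is replaced by $\e^{-z}/\sqrt{z}$ at the cost of an $O(1/t)$ error, the lower limit is sent from $1/(4t)$ to $0$, and the resulting logarithmic moments $\int_0^{\infty}\e^{-z} z^{-1/2} a(z)^{n-k}\, dz$ are exactly the integrals appearing in \eqref{gn}; multiplication by $L/(2\pi\sqrt{\pi})$ reproduces $\frac{1}{2\pi\sqrt\pi}\sum_{n=0}^{N} g_n(\theta)/L^n$.

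The main obstacle is uniform control of $E_N$. Four error sources must be tracked simultaneously: (i) the $\vararch$ correction $\eta = O(t^{-2})$; (ii) the Neumann truncation remainder, bounded pointwise by $C(|a(z)| + |\phi|)^{N+1}/(L^{N+1}|w - i\phi|)$, which is integrated using $|w - i\phi| \ge \max(w, |\phi|)$ and the finiteness of every moment $\int_0^\infty \e^{-z} z^{-1/2}(\log 8z)^j\, dz$; (iii) the $O(1/t)$ weight and lower-limit replacements; and (iv) the boundary region $z \in [1/(4t),\delta]$ where $w$ can be arbitrarily small and the Neumann expansion breaks down, for which one falls back on the direct estimate $G(\phi,w) \le \min(1/w, 1/(2|\phi|))$. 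Contribution (ii) dominates; expanding $|\phi|^{N+1} = |\theta \pm \pi/2|^{N+1}$ as a polynomial in $|\theta|$ and collecting the four estimates gives the envelope $S_{N+2}(|\theta|)/L^{N+1}$ asserted in \eqref{En}.
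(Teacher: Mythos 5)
Your overall route is the one the paper follows: start from \eqref{f2} with $\rho=1$, write $w=\tfrac12\vararch(4tz)=\log\sqrt t+b$ (with $b$ as in \eqref{defb}), expand the two Cauchy-kernel fractions in inverse powers of $L=\log\sqrt t$ with coefficients $(-1)^n\Re\big(b_0+i(\tfrac\pi2\pm\theta)\big)^n$, $b_0=\tfrac12\log(8z)$, and integrate term by term. Your finite geometric series for $\Re(w-i\phi)^{-1}$ is just a repackaging of the paper's Lemma \ref{gexp}: it yields exactly the coefficients \eqref{An}, and your identification of the limiting logarithmic moments with \eqref{gn} is the same identification the paper makes. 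So the algebraic core is fine and not genuinely different.

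The gap is in your item (iv), the lower end of the integral. You cut at a \emph{fixed} $\delta>0$ and propose to dispose of $z\in[1/(4t),\delta]$ by the crude bound $G\le\min(1/w,1/(2|\phi|))$. This cannot give \eqref{En}: for fixed $\delta$ that region carries an order-one share of every coefficient integral $\int_0^\infty \e^{-z}z^{-1/2}b_0^{\,j}\,dz$ (already for $n=0$ its share is about $4\sqrt\delta$), and on most of it $w\asymp L$, so $L\int_{1/(4t)}^{\delta}\e^{-z}z^{-1/2}G\,dz$ is of order $\sqrt\delta$, not $O(L^{-N-1})$; hence either you lose those portions of the $g_n$ or your remainder is $O(1)$. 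Your uniformity claim $\eta=O(t^{-2})$ only on $[z_0,\infty)$ with fixed $z_0$ ties you to this fixed cutoff, and in (ii) the option $|w-i\phi|\ge|\phi|$ is one power of $L$ short of what is needed (recall the overall factor $L$ in \eqref{exp2}) and degenerates at $\theta=\pm\pi/2$; you must use $|w-i\phi|\ge w\gtrsim L$, which again forces you to say where that holds. The repair is the paper's $t$-dependent splitting at $z=1/(4\sqrt t)$: there $1+bx\ge\tfrac12$, i.e.\ $w\ge\tfrac12 L$, so the truncation remainder integrates (using the finiteness of $\int_0^\infty \e^{-z}z^{-1/2}|\log z|^{j}dz$) to $O\big((1+|\theta|^{N+2})L^{-N-1}\big)$, while on $(1/(4t),1/(4\sqrt t))$ both the true integrand --- via the substitution $y=\vararch(4tz)$ as in \eqref{up}, which is what tames the integrable $1/w$ singularity at the endpoint, needed precisely when $\theta=\pm\pi/2$ --- and the truncated pieces $\int_0^{1/(4\sqrt t)}$ of the coefficient integrals are $O(t^{-1/4+\epsilon})$, hence negligible against every power of $1/L$; the replacement $b\to b_0$ must likewise be controlled down to $z\sim t^{-1/2}$ through $|b-b_0|\lesssim 1/(zt)$ rather than on $[z_0,\infty)$ only. (Minor: the weight and lower-limit replacements cost $O(t^{-1/2+\epsilon})$, not $O(1/t)$, which is still harmless.)
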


\smallskip

\noindent {\bf Remark }
It is easy to compute $g_0=2\sqrt \pi$, and then 
the local limit theorem  \eqref{dkn} follows at once  from Theorem  \ref{th2}. 
Moreover, corrections of any order to \eqref{dkn} in inverse powers of $\log \sqrt t$ follow
from \eqref{exp2}. Simple integration of  the polynomials $g_n$ provide the coefficients, 
let us say $G_n$, for the expansion of 
$\log\sqrt t\, \bb P\big(\alpha<\Theta_t \le \beta \big)=\sum_{n\ge 0}\frac{G_n}{(\log \sqrt t)^n}$. 
For instance, computing with the aid of  \eqref{Ga} and \eqref{gn}, 
 $$g_1=-2\int^\infty_0\frac{\e^{-z}}{\sqrt z} 
 (\log 8+\log z)\,dz=2\sqrt \pi (\gamma+\log \frac 12)\,,
 $$
where $\gamma$ is the Euler constant, we have
$$ 
\log\sqrt t\, \bb P\big(\alpha<\Theta_t \le \beta \big)=
      \frac {\beta-\alpha}{\pi}\,+\,\frac{1}{\log\sqrt t }\,
      \frac {(\beta-\alpha)(\gamma+\log\frac 12)}{\pi} 
      +O\big(1/(\log\sqrt t) \big)^2
      $$
      \bigskip

The proofs of both theorems are given after stating and proving the next three lemmas.

 Let us shorthand
\begin{equation}
\label{defb}
x=\frac{1}{\log\sqrt t}\quad \mbox{ and} \quad 
 b=\frac12\log \big(\frac{4z}{\rho^2}+\sqrt{(\frac{4z}{\rho^2})^2-\frac{1}{t^2}}\big)
\end{equation}
\begin{lemma}
\label{logint}
The density $f$ satisfies: 
\begin{align}
\nonumber
\log (\sqrt t)\,&f(\theta\log (\sqrt t),t;\rho)
\,=\,\mathcal N(\theta,t;\rho)\,+\,
 \frac{1}{2\pi\sqrt\pi}\,\int ^{\infty}_{\rho^2/4t}\,
\frac{\e^{-(z+\rho^2/4t)}}{\sqrt{z+\rho^2/4t}}
\\
& 
\label{lf}
\times\,\Big(\frac{1+bx}{(1+bx)^2+(\theta+\frac{\pi}{2}x)^2}
\,+\,\frac{1+bx}{(1+bx)^2+(\theta-\frac{\pi}{2}x)^2}
\Big)\, dz,
\end{align}

The term $\mathcal N$ is negligible as $t\to \infty$, uniformly in $\theta$
in the following sense:  for any given $K\in\mathbb N$ and $\delta<1/2$,
 there exists a positive  constant $C$, independent of $\rho $ and $t$ such that
\begin{equation}
\label{q}
\sup_{\theta\in \mathbb R} (1+\theta^2)^K\,
|\mathcal N(\theta,t;\rho)|
\le C\rho \,t^{-\delta}
\mbox {   for any $t>2$}
\end{equation}
\end{lemma}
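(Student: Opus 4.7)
The plan is to derive \eqref{lf} directly from the Corollary's formula \eqref{f2} by the substitution $\theta \mapsto \theta\log\sqrt t$ and multiplication by $\log\sqrt t$, while identifying $\mc N$ with what remains of the indicator (first) summand of \eqref{f2}. The bound \eqref{q} will then follow from the narrow support of that indicator combined with the decay of $\log\sqrt t / \sqrt t$ as $t\to\infty$.

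First I would establish the key algebraic identity
\begin{equation*}
\tfrac12\,\vararch\bigl(4tz/\rho^2\bigr)\;=\;\log\sqrt t + b \;=\; \frac{1+bx}{x},
\end{equation*}
starting from $\vararch(u)=\log\bigl(u+\sqrt{u^2-1}\bigr)$ and factoring $t$ out of the square root, namely $4tz/\rho^2 + \sqrt{(4tz/\rho^2)^2-1} = t\bigl(4z/\rho^2+\sqrt{(4z/\rho^2)^2-1/t^2}\bigr)$, so that a $\tfrac12\log t=1/x$ separates off. This is the only computation that really needs attention.

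Next, applying $\theta\mapsto \theta/x$ in the integral term of \eqref{f2}, each denominator $\bigl(\tfrac12\vararch(4tz/\rho^2)\bigr)^2+(\theta\log\sqrt t\pm\pi/2)^2$ becomes $\tfrac{1}{x^2}\bigl[(1+bx)^2+(\theta\pm\pi x/2)^2\bigr]$, while the numerator equals $(1+bx)/x$. Hence each rational kernel equals $\tfrac{x(1+bx)}{(1+bx)^2+(\theta\pm\pi x/2)^2}$, and multiplication by $\log\sqrt t = 1/x$ gives exactly the integrand in \eqref{lf}.

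Finally, I would identify the error as $\log\sqrt t$ times the indicator contribution to \eqref{f2}:
\begin{equation*}
\mc N(\theta,t;\rho)=\frac{\rho\log\sqrt t}{\sqrt{2\pi t}}\,\e^{-\frac{\rho^2}{4t}\bigl(1-\cos(2\theta\log\sqrt t)\bigr)}\cos(\theta\log\sqrt t)\,\id_{(-\pi/2,\pi/2)}(\theta\log\sqrt t).
\end{equation*}
The indicator forces $|\theta|<\pi x/2 \le \pi/(2\log\sqrt 2)$ when $t>2$, so $(1+\theta^2)^K$ is uniformly bounded on the support of $\mc N$ by a constant depending only on $K$. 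The exponential and cosine factors are each bounded by $1$, and $\log\sqrt t /\sqrt t \le C_\delta\, t^{-\delta}$ for any $\delta<1/2$ and $t>2$. Combining these three bounds yields \eqref{q}. I do not expect any real obstacle; the arcosh identity is the only non-routine step and everything else is bookkeeping.
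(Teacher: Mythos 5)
Your proposal is correct and follows the same route as the paper: substitute $\theta\mapsto\theta\log\sqrt t$ in \eqref{f2}, use $\vararch(4tz/\rho^2)=\log t+2b$ (so $\tfrac12\vararch(4tz/\rho^2)=(1+bx)/x$) to reorganize the fractions into the integrand of \eqref{lf}, and take $\mathcal N$ to be the indicator term, whose bound \eqref{q} follows exactly as you argue from the compact support $|\theta|\le \pi x/2$ and $\log\sqrt t/\sqrt t\le C_\delta t^{-\delta}$. The paper's proof is just a terser version of the same argument.
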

\begin{proof}
Consider the expression for $\log (\sqrt t)\,f(\theta\log (\sqrt t),t;\rho)$
 obtained from \eqref{f2},
 and  denote by $\mathcal N$ the term coming from the  first term on the r.h.s..
It is clear that it satisfies \eqref{q}.

The other term comes simply from organizing the fractions in the integral,
 \eqref{defb} and the formula $\vararch{y}=\log\big(y+\sqrt{y^2-1}\big)$. 
 \end{proof}
In the next lemmas, we show how to  obtain an explicit expansion in powers of $x$
 of the fractions 
in the last line in \eqref{lf}. Let us denote by $\Re(z)$ the real part of $z\in \bb Z$. 

\begin{lemma}
\label{gexp} Consider, for  $b,c,d \in \mathbb R$ such that 
$c^2-4d<0$, the rational function  
$$
g(x)=\frac{1+bx}{1+cx+dx^2}
$$ 
For any given natural $N\ge 0$, 
the following finite $N$--expansion with remainder holds:
$$
g(x)=
 \sum_{n=0}^{N} a_n x^n\,+\,q_N(x),
$$
\begin{align}
\label{an}
\mbox{ where }\quad &a_n=\frac {(-1)^n}{2^n}\,\Re
\big((1+i\frac{2b-c}{\sqrt{4d-c^2}})(c+i\sqrt{4d-c^2})^n\big)\quad n\ge -1
\\ \label{rn}
&q_N(x)=\,-x^{N+1}(\frac{da_{N-1}+ca_N+xda_N}{1+cx+dx^2})
\end{align}
\end{lemma}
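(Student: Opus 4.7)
The condition $c^2-4d<0$ ensures the denominator has two complex conjugate roots; writing $\alpha=\frac{-c+i\sqrt{4d-c^2}}{2}$ and $\beta=\bar\alpha$, we have $1+cx+dx^2=(1-\alpha x)(1-\beta x)$ with $\alpha+\beta=-c$ and $\alpha\beta=d$. My plan is to decompose $g$ by partial fractions and then substitute the finite geometric expansion
\begin{equation*}
\frac{1}{1-\gamma x}=\sum_{n=0}^{N}\gamma^{n}x^{n}+\frac{(\gamma x)^{N+1}}{1-\gamma x}
\end{equation*}
for $\gamma=\alpha$ and $\gamma=\beta$; this produces both the closed form \eqref{an} for $a_n$ and the remainder \eqref{rn} in one stroke.

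From $g(x)=\frac{C}{1-\alpha x}+\frac{D}{1-\beta x}$ with $C=(\alpha+b)/(\alpha-\beta)$ and $D=\bar C$, the expansion yields $a_n=C\alpha^n+D\beta^n=2\Re(D\beta^n)$, which is manifestly real. Using $\beta-\alpha=-i\sqrt{4d-c^2}$, a direct computation reduces $2D$ to $1+i(2b-c)/\sqrt{4d-c^2}$; since $-2\beta=c+i\sqrt{4d-c^2}$, one has $\beta^n=(-1)^n 2^{-n}(c+i\sqrt{4d-c^2})^n$, and multiplying these two identities inside the real part yields exactly \eqref{an}.

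For the remainder, I would combine the two geometric tails $C(\alpha x)^{N+1}/(1-\alpha x)+D(\beta x)^{N+1}/(1-\beta x)$ over the common denominator $1+cx+dx^2$; using $\alpha\beta=d$, the numerator collapses to $x^{N+1}(a_{N+1}-xd\,a_N)$. The Taylor coefficients obey the recurrence $a_n+ca_{n-1}+da_{n-2}=0$ for $n\ge 1$, read off from $(1+cx+dx^2)g(x)=1+bx$ (with the extended convention $a_{-1}=-b/d$, which makes the $N=0$ case of \eqref{rn} consistent with a quick direct check). Replacing $a_{N+1}$ via the recurrence then produces precisely \eqref{rn}.

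The only non-mechanical step is the algebraic simplification of $2D$ into the real-plus-imaginary-part form used above; this is where the slightly unusual factor $1+i(2b-c)/\sqrt{4d-c^2}$ in \eqref{an} comes from, and some care is needed with signs and with the conjugation identity relating $\beta^n$ to $(c+i\sqrt{4d-c^2})^n$. Everything else is partial fractions, one finite geometric series, and one application of the recurrence.
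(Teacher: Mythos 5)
Your proof is correct, but it follows a genuinely different route from the paper's. You factor the denominator over $\mathbb{C}$, decompose $g$ by partial fractions as $C/(1-\alpha x)+D/(1-\beta x)$ with $D=\bar C$, and insert the finite geometric identity $\frac{1}{1-\gamma x}=\sum_{n=0}^N(\gamma x)^n+\frac{(\gamma x)^{N+1}}{1-\gamma x}$; this yields the closed form \eqref{an} and the remainder \eqref{rn} simultaneously (after recombining the two tails over the common denominator $1+cx+dx^2$, using $\alpha\beta=d$ and the recurrence $a_{N+1}=-ca_N-da_{N-1}$), and your $D$ and $\beta$ are exactly the paper's $A$ and $r$. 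The paper instead defines the $a_n$ by the recurrence $a_{n+2}+ca_{n+1}+da_n=0$ with $a_0=1$, $a_1=b-c$, solves it via the exponential generating function \eqref{F} and the ODE $F''+cF'+dF=0$ to obtain $a_n=2\Re(Ar^n)$, and then proves \eqref{rn} by a separate induction on $N$. Your approach buys a single computation that produces coefficients and remainder at once, with no induction, the remainder appearing naturally as $x^{N+1}(a_{N+1}-x\,d\,a_N)/(1+cx+dx^2)$ before being rewritten through the recurrence; the paper's generating-function route avoids partial fractions and the complex factorization of the denominator, at the cost of treating the remainder separately. One small point of hygiene: the lemma defines $a_{-1}$ by setting $n=-1$ in \eqref{an}, so for the $N=0$ case you should note that your value $a_{-1}=-b/d$ agrees with that formula --- it does, and in fact this is automatic in your framework, since $a_n:=C\alpha^n+D\beta^n=2\Re(D\beta^n)$ holds for every integer $n$ and $C\alpha^{-1}+D\beta^{-1}=(C\beta+D\alpha)/d=-b/d$.
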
 
\begin{proof}
The sequence ${a_n}$ has to satisfy the recurrence
 $a_{n+2}+c\,a_{n+1}+d\,a_n=0$ for $n\ge 0$,  with $a_0=1$ and $a_1=b-c$. To solve it,  
let us introduce  the function 
\begin{equation}
\label{F}
F(x)=\sum_{n\ge 0}\frac{a_n\,x^n}{n!}
\end{equation}
From the recurrence for the ${a_n}'s$, it follows that $F$ has to satisfy the differential equation 
$F''+cF'+dF=0$, with boundary conditions  $F(0)=1, F'(0)=b-c$, whose solution is 
\begin{equation}
\label{Fexp}
F(x)=A\e^{rx}+\bar A\e^{\bar rx}
\end{equation}
with $A=
\frac12+i\frac{2b-c}{2\sqrt{4d-c^2}}$ and 
 $r=-\frac{c+i\sqrt{4d-c^2}}{2}$. 
Expanding the right hand side of 
\eqref{Fexp} in its  Taylor series, and equating coefficients with the expression
  \eqref{F}, we get   $a_n=2\Re(Ar^n)$, which is exactly \eqref{an}.

 The formula \eqref{rn} for the remainder is proven easily by induction for $N\ge 1$.
 Indeed, compute
$$
g(x)-a_0-(b-c)x=-x^2(\,\frac{d+c(b-c)+xd(b-c)}{1+cx+dx^2})
$$
to prove it  for $N=1$. Suppose it true for $N$, and write next 
\begin{equation}
\nn
g(x)-\sum_{n=0}^{N} a_n x^n
-a_{N+1}x^{N+1}=q_N(x)-a_{N+1}x^{N+1}.
\end{equation}
Use the recurrence relation for the $a_n$ to conclude that the r.h.s is $q_{N+1}$.
  
A direct computation shows that the formula is also valid for $N=0$, with  $a_{-1}$
given by taking $n=-1$ in \eqref{an}. 
 \end{proof}
 \medskip

\begin{lemma}
\label{fraclem}
The fractions on the second line of \eqref{lf} can be expanded
in powers of $x$ as follows: for any given integer 
$N\ge 0 $ and $x\le 4$, 
\begin{align}
\nonumber
&\frac{1+b\,x}{(1+b\,x)^2+(\theta+\frac{\pi}{2}\,x)^2}
\,+\,\frac{1+b\,x}{(1+b\,x)^2+(\theta-\frac{\pi}{2}\,x)^2}=
\\ \label{fcs}
 &\qquad \sum _{n=0}^{N}
\frac{(-1)^n\,P_n(b)\,\cos\big((n+1)\beta\big)}
{2^{n-1}(1+\theta^2)^{\frac {n+1}{2}}}\,x^n\,+\,Q_N,
\end{align}
where 
\begin{equation}
\label{beta}
\e^{i\beta}=\frac{1+i\theta}{(1+\theta^2)^\frac 12}, \quad 
P_n(b)=\!\sum_{k\le n, k\, even}\!
\binom{n}{k}(2b)^{n-k}(-1)^{\frac k2}\pi^k,
\end{equation}
\begin{align}
\nonumber 
\mbox{and }\quad  Q_N=&\frac{x^{N+1}}{(1+\theta^2)^{\frac N2 +1}}\,  \mathcal{E}_N (b,\theta)
\qquad \mbox{ with }  
\\
\label{QN}
|&\mathcal {E}_N (b,\theta)|\le 
K\,\frac{(4b^2+\pi ^2)^{\frac N2+1}}{2^{N-1}}\,
(\frac{\id_{|\theta|\le 20}}{(1+b\,x)^2}
\,+\,\id_{|\theta |> 20}) 
\end{align}
for some constant   $K>0 $ 
independent of $x$, $b,N$ and $\theta$. 
\end{lemma}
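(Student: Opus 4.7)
I plan to apply Lemma~\ref{gexp} to each of the two fractions in \eqref{fcs} separately and then sum. Factoring the denominators yields $(1+bx)^2+(\theta\pm\pi x/2)^2 = (1+\theta^2)(1+\tilde c^{(\pm)} x+\tilde d\,x^2)$ with $\tilde c^{(\pm)}:=(2b\pm\pi\theta)/(1+\theta^2)$ and $\tilde d:=(b^2+\pi^2/4)/(1+\theta^2)$. A direct computation shows $(\tilde c^{(\pm)})^2-4\tilde d=-(2b\theta\mp\pi)^2/(1+\theta^2)^2\le 0$, so after extracting the factor $1/(1+\theta^2)$ from each fraction, both pieces have exactly the form $g(x)$ treated by Lemma~\ref{gexp}.

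The crux of the coefficient identification is the algebraic identity $\tilde c^{(\pm)}+i\sqrt{4\tilde d-(\tilde c^{(\pm)})^2} = 2\gamma_\mp w/(1+\theta^2)$ (when $2b\theta\mp\pi\ge 0$), where $w:=1+i\theta$ and $\gamma_\pm:=b\pm i\pi/2$; in the opposite sub-case one gets $2\gamma_\pm\bar w/(1+\theta^2)$ instead, but taking the real part in \eqref{an} yields the same value either way, namely $a_n^{(\pm)} = \frac{(-1)^n}{(1+\theta^2)^n}\Re(\gamma_\mp^{\,n}\,w^{n+1})$. Summing the two contributions, dividing by $1+\theta^2$, and using $\gamma_+^n+\gamma_-^n = 2\Re((b+i\pi/2)^n) = P_n(b)/2^{n-1}$ (binomial theorem) together with $\Re(w^{n+1})=(1+\theta^2)^{(n+1)/2}\cos((n+1)\beta)$ from the polar form of $w$ then reproduces exactly the main sum in \eqref{fcs}.

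For the remainder I use the explicit formula \eqref{rn} applied to each $g^{(\pm)}$. The key observation is that $1+\tilde c^{(\pm)}x+\tilde d x^2 = |w_\pm|^2/(1+\theta^2)$ with $w_\pm:=(1+bx)+i(\theta\pm\pi x/2)$, so each $q_N^{(\pm)}$ acquires a factor $(1+\theta^2)/|w_\pm|^2$ — this is exactly what will produce the $(1+bx)^{-2}$ in the claimed bound. Using $|a_n^{(\pm)}|\le |\gamma|^n(1+\theta^2)^{(1-n)/2}$ (immediate from the formula above), $\tilde d=|\gamma|^2/(1+\theta^2)$ and $|\tilde c^{(\pm)}|\le 2|\gamma|/(1+\theta^2)^{1/2}$ (the last from $\tilde c^2\le 4\tilde d$), and absorbing the $|\gamma|^{N+1}$ contributions into $|\gamma|^{N+2}$ via $|\gamma|\ge\pi/2$ and $x\le 4$, the bracketed numerator in \eqref{rn} is dominated by $C|\gamma|^{N+2}(1+\theta^2)^{(1-N)/2}$, and $|\gamma|^{N+2}=(4b^2+\pi^2)^{N/2+1}/2^{N+2}$ already supplies the target power of $4b^2+\pi^2$.

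The main obstacle — and the reason for the indicator split in \eqref{QN} — is to bound $(1+\theta^2)/|w_\pm|^2$ uniformly. For $|\theta|\le 20$ the brute estimate $|w_\pm|\ge 1+bx$ combined with $1+\theta^2\le 401$ gives $(1+\theta^2)/|w_\pm|^2\le 401/(1+bx)^2$; for $|\theta|>20$ one instead has $\pi x/2\le 2\pi<|\theta|/2$, so $|w_\pm|\ge|\theta|/2$ and hence $(1+\theta^2)/|w_\pm|^2\le 8$. Combining with the numerator bound above then yields \eqref{QN} with some absolute $K$.
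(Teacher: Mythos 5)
Your argument is essentially the paper's own proof: the same factorization $(1+bx)^2+(\theta\pm\frac{\pi}{2}x)^2=(1+\theta^2)\,(1+\tilde c^{(\pm)}x+\tilde d\,x^2)$, the same application of Lemma \ref{gexp} to each fraction separately, the same identification of the coefficients (your $\frac{(-1)^n}{(1+\theta^2)^n}\Re\big(\gamma_\mp^{\,n}w^{n+1}\big)$ is exactly \eqref{an1}, and your observations that the sign of $2b\theta\mp\pi$ is immaterial after taking real parts, that $\gamma_+^n+\gamma_-^n=P_n(b)/2^{n-1}$, and that $\Re(w^{n+1})=(1+\theta^2)^{\frac{n+1}{2}}\cos((n+1)\beta)$ are all correct), and the same split at $|\theta|=20$ with the bounds $|w_\pm|\ge 1+bx$ resp.\ $|w_\pm|\ge|\theta|/2$ used in the paper's estimate of the denominator $D$.

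There is, however, one exponent slip in the remainder bookkeeping, and as literally written your final combination does not give \eqref{QN} for $|\theta|>20$. Your own displayed ingredients give for the numerator of \eqref{rn} the bound $\tilde d\,|a_{N-1}|+|\tilde c|\,|a_N|+x\tilde d\,|a_N|\le C\,|\gamma|^{N+2}(1+\theta^2)^{-\frac N2}$ (indeed $\tilde d\,|a_{N-1}|\le|\gamma|^{N+1}(1+\theta^2)^{-\frac N2}$, $|\tilde c|\,|a_N|\le 2|\gamma|^{N+1}(1+\theta^2)^{-\frac N2}$ and $x\tilde d\,|a_N|\le 4|\gamma|^{N+2}(1+\theta^2)^{-\frac{N+1}{2}}$), not $C\,|\gamma|^{N+2}(1+\theta^2)^{\frac{1-N}{2}}$ as you state. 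With your (weaker) stated bound, after multiplying by $(1+\theta^2)/|w_\pm|^2$ from the denominator and by the overall prefactor $\frac{1}{1+\theta^2}$, closing the argument would require $(1+\theta^2)^{3/2}/|w_\pm|^2$ to be bounded uniformly for $|\theta|>20$, which is false (it grows like $|\theta|$); your last paragraph controls only $(1+\theta^2)/|w_\pm|^2$, which is precisely what the correct exponent $-\frac N2$ calls for. So the proof is right in substance and coincides with the paper's estimates around \eqref{arn} once $(1+\theta^2)^{\frac{1-N}{2}}$ is replaced by $(1+\theta^2)^{-\frac N2}$; without that correction the final sentence claiming \eqref{QN} does not follow from the bounds you state.
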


\begin{proof}

Let $N\ge 0$ given, and set   

\begin{equation}
\label{P1}
\frac{1+b\,x}{(1+b\,x)^2+(\theta+\frac{\pi}{2}\,x)^2}=
\frac{1}{1+\theta^2}\big(\frac{1+b\,x}{1+c\,x+d\,x^2}\big), 
\end{equation}
for  $c=\frac{2b+\pi\theta}{1+\theta^2}$  and   
$ d=\frac{b^2+\pi^2/4}{1+\theta^2}$. Observe that 
$c^2-4d=-\frac{(2b\theta-\pi)^2}{(1+\theta^2)^2}$, so  the conditions of
Lemma  \ref{gexp} are satisfied and we may apply  formulae
\eqref{an} and \eqref{rn} to obtain, after some algebraic manipulations, 
\begin{equation}
\label{F1} 
\frac{1+b\,x}{1+c\,x+d\,x^2}=
\sum_{n=0}^N 
a_n^{(1)}
\,x^n\,+\,q_N^{(1)}, \qquad \mbox{ with} 
\end{equation}
\begin {align}
\label{an1}
a_n^{(1)}\,&=\,\frac{(-1)^n} {2^{n}(1+\theta^2)^n}\Re
 \big((1+i\theta)^{n+1}(2b-i\pi)^n\big)\quad \mbox{ and}\\
\label{rn1}
q_N^{(1)}\,&=\,-x^{N+1}\,
\frac{(b^2+\pi^2/4)(a_{N-1}^{(1)}+\,x\,a_{N}^{(1)})
+(2b+\pi\theta)a_{N}^{(1)}}{1+\theta^2+(2b+\pi\theta)\,x+
(b^2+\pi^2/4)\,x^2}
\end{align}
Since the second fraction in \eqref{fcs} is just the first one evaluated at $-\theta$,
 the expansion for the former is: 
\begin{equation}
\label{F2}
\frac{1+b\,x}{(1+b\,x)^2+(\theta-\frac{\pi}{2}\,x)^2}=
 \frac{1}{1+\theta^2}\Big(\sum_{n=0}^N 
a_n^{(2)}
\,x^n\,+\,q_N^{(2)}\Big), 
\end{equation}
where the following expression for $a_n^{(2)}$ results from evaluating \eqref{an1} at $-\theta$ 
and conjugating the complex inside  $\Re$,
\begin{equation}
\nonumber
a_n^{(2)}\,=\,\frac{(-1)^n} {2^{n}(1+\theta^2)^n}\Re
 \big((1+i\theta)^{n+1}(2b+i\pi)^n\big) \mbox{ and }
q_N^{(2)}(\theta)\,=\,q_N^{(1)}(-\theta)
\end{equation}

From  \eqref{beta} and \eqref{an1} we have 
\begin{align}
\nonumber
a_n^{(1)}+a_n^{(2)}&= \frac{(-1)^n}{2^n(1+\theta^2)^n}\,
\Re \Big((1+i\theta)^{n+1}\big((2b-i\pi)^{n}+(2b+i\pi)^{n}\big)\Big)
\\ \nonumber
&=\frac{(-1)^n\,P_n(b)}{2^{n-1}(1+\theta^2)^n}\,
\Re\Big((1+i\theta)^{n+1}\Big)=
\frac{(-1)^n\,P_n(b)}{2^{n-1}(1+\theta^2)^{\frac{n-1}{2}}}
\cos\big((n+1)\beta\big).
\end{align}
Then, \eqref{fcs} follows from \eqref{P1}, \eqref{F1} 
and \eqref{F2},  with $Q_N=\frac{1}{1+\theta^2}(q_N^{(1)}+q_N^{(2)})$. 
To complete the proof, we only need to show 
\eqref{QN}. From \eqref{an1},
\begin{equation}
\nonumber
|a_n^{(1)}|\le 
\frac{\big |(1+i\theta)^{n+1}(2b-i\pi)^n\big |}{2^n(1+\theta^2)^n}\,=\,
 \frac{(4b^2+\pi^2)^{\frac n2}}{2^n(1+\theta^2)^{\frac {n-1}{2}}},
\end{equation}
and then from  \eqref{rn1}, there is a constant  $K_1>0$ such that 
\begin{equation}
\label{arn}
|q_N^{(1)}|\le\,x^{N+1}\,K_1\, \frac{(4b^2+\pi^2)^{\frac N2+1}}
{D\,2^{N-1}(1+\theta^2)^{\frac N2-1}},
\end{equation}
where $D$ is the denominator in \eqref{rn1}.
 Recall that $x \le 4$ to estimate
\begin{align}
\nonumber
D&= 1+\theta^2+(2b+\pi\theta)\,x+
(b^2+\pi^2/4)\,x^2=(1+\theta^2)
\Big(\frac{(1+b\,x)^2}{1+\theta^2}
+\frac{(\theta+\frac\pi 2\eta)^2}{1+\theta^2}\Big)
\\ \nonumber
&\ge(1+\theta^2)\,\Big(\frac{(1+b\,x)^2}{1+\theta^2}
\id_{|\theta|\le 20}+
\frac{(\theta+\frac\pi 2\,x)^2}{1+\theta^2}
\id_{|\theta|> 20}\Big)
\\ 
& \ge (1+\theta^2)\Big(\frac{(1+b\,x)^2}{401}
\id_{|\theta|\le 20}+
\frac 13
\id_{|\theta|> 20}\Big),
\end{align}
so, from \eqref{arn},
$$ 
|q_N^{(1)}|\le K_2\,x^{N+1}\,\frac{(4b^2+\pi^2)^{\frac N2+1}}
{2^{N-1}(1+\theta^2)^{\frac N2}} \,\big(\frac{\id_{|\theta|\le 20}}{(1+b\,x)^2}
\,+\,\id_{|\theta |> 20}\big) 
$$

The same estimate holds for $|q_N^{(2)}|$, and  \eqref{QN} follows. 
\end{proof}

\noindent {\it Proof of Theorem \ref{th1}}

The proof consists in substituting the fractions in  \eqref{lf} by the
$N$--expansion \eqref{fcs} and then integrate term by term, recalling that only $b$
depends on the integration variable $z$. 
Let  $N\ge 0$ be given, recall that $x=\frac{1}{\log\sqrt t}$, and write then
\begin{multline}
\log (\sqrt t)\,f(\theta\log (\sqrt t),t;\rho)
\,=\,\mathcal N(\theta,t;\rho)\,+
\\
\label{lfff}
\frac{1}{2\pi\sqrt\pi}\,\sum _{n=0}^{N}(-1)^n \,x^n\, 
\frac{\,\cos\big((n+1)\beta\big)}
{2^{n-1}(1+\theta^2)^{\frac {n+1}{2}}}
\int ^{\infty}_{\rho^2/4t}\,
\frac{\e^{-(z+\rho^2/4t)}}{\sqrt{z+\rho^2/4t}}\,P_n(b)\, dz\,+\\
\frac{1}{2\pi\sqrt\pi}\,\int ^{\infty}_{\rho^2/4t}\,
\frac{\e^{-(z+\rho^2/4t)}}{\sqrt{z+\rho^2/4t}}\,Q_N \,dz.\quad \quad
\end{multline}
Call 
\begin{align}
\label{cnf}
&C_n=\int ^{\infty}_{\rho^2/4t}\,
\frac{\e^{-(z+\rho^2/4t)}}{\sqrt{z+\rho^2/4t}}\,P_n(b)\, dz
\\
\label{qnf}
&R_N=\mathcal N(\theta,t;\rho)+\frac{1}{2\pi\sqrt\pi}\,\int ^{\infty}_{\rho^2/4t}\,
\frac{\e^{-(z+\rho^2/4t)}}{\sqrt{z+\rho^2/4t}}\,Q_N \,dz.
\end{align}
From \eqref{beta}, $\cos\big((n+1)\beta\big)=A_n(\theta)$, so to prove Theorem \ref{th1}
it is enough to show that $C_n$ and $R_N$ satisfy \eqref{cnt} and  \eqref{RN} respectively. 

Recall the definition of $b$ in  \eqref{defb}, and  set 
\begin{equation}
\label{defb0}
b_0=\underset{t\to\infty}{\lim}\,b\,=\frac12 \log (\frac{8 z}{\rho^2}) \quad \Rightarrow 
c_n=\underset{t\to\infty}{\lim} C_n =\int ^{\infty}_{0}\,\frac{\e^{-z}}{\sqrt{z}}\,P_n(b_0)\,dz
\end{equation}
as follows from \eqref{beta} and \eqref{cn}. The ${c_n}'s$ are clearly finite, and we have  
\begin{equation}
\label{Cnest}
C_n=c_n
+
\int ^{\infty}_{\rho^2/4t}\frac{P_n(b)\,\e^{-(z+\rho^2/4t)}}{\sqrt{z+\rho^2/4t}}
\,-\frac{P_n(b_0)\,\e^{-z}}{\sqrt{z}}\,dz\,-\int _{0}^{\rho^2/4t}
\frac{P_n(b_0)\,\e^{-z}}{\sqrt{z}}\,dz
\end{equation}

To estimate the integrals above, let us  shorthand $w=\frac12 \big(1-\sqrt{1-(\frac{\rho^2}{4zt})^2}\big)$
 and note that $0\le w\le \rho^2/8tz\le 1/2$ for $z\ge\rho^2/4t$  to obtain
\begin{equation}
\label{bb0}
|b-b_0|= -\frac12 \log\, (1-w)=\frac12 \log\, (1+\frac{w}{1-w})\le w\le  \frac{\rho^2}{8zt}
\end{equation}
Denote by $k_n$ positive constants depending only on $n$, that may change from line to line. 
From \eqref{beta} and the previous inequality, it follows that
\begin{equation}
\label{dPb}
|P_n(b)-P_n(b_0)|\le k_n\,|b-b_0|\big(|b_0|^{n-1} +1\big),
\end{equation}
Then, let $0<\delta_0<1/2$, and estimate with the aid of \eqref{bb0} and \eqref{dPb}, 
\begin{multline}
 \label{rot}
 \big|\int ^{\infty}_{\rho^2/4t}\,\frac{\e^{-(z+\rho^2/4t)}}{\sqrt{z+\rho^2/4t}}
\,P_n(b)-\frac{\e^{-z}}{\sqrt{z}}\,P_n(b_0)\,dz\big|\,\le
\\
\int ^{\infty}_{\rho^2/4t}\frac{\e^{-(z+\rho^2/4t)}}{\sqrt{z+\rho^2/4t}}
\,|P_n(b)-P_n(b_0)| dz\, + \phantom{1234567890}
\\
\int ^{\infty}_{\rho^2/4t}\frac{\e^{-z}}{\sqrt{z}}
\,|P_n(b_0)|\big(1-\frac{\e^{-\rho^2/4t}\sqrt z}{\sqrt{z+\rho^2/4t}}\big)\,dz
\\
\le \frac{k_n}{t} \int ^{\infty}_{\rho^2/4t}\,\e^{-z}z^{-3/2}\big(|b_0|^{n-1} +1\big)dz\,+\,
 \int ^{\infty}_{\rho^2/4t}\frac{\e^{-z}}{\sqrt{z}}\,|P_n(b_0)|
 \big(\frac{\rho^2}{4t}+\frac{\rho}{\sqrt{4tz}}\big)dz\\
 \le \frac{k_n}{t}\big(\int ^{1}_{\rho^2/4t}\!z^{-(3/2)-\delta_0}dz\,+1\big)+
 \frac{k_n}{t}\big(\int ^{1}_{\rho^2/4t}z^{-(1/2)-\delta_0}dz\,+1\big)
 \\
 +\,\frac{k_n}{\sqrt t}\big(\int ^{1}_{\rho^2/4t}z^{-1-\delta_0}dz\,+1\big)
 \le k_n\,t^{-(1/2)+\delta_0}
 \end{multline}
To obtain the last inequality, we have splitted the integrals according to $z<1$ or $z\ge1$, 
after observing that,  for  $S_n$ a polynomial of degree $n$,  $|S_n(b_0)|\,z^{\delta_0}\le k_n$ for $0<z<1$ . 
For $z\ge 1$, the integrals are clearly uniformly bounded. 
The estimation of the  last term in \eqref{Cnest} is  similar: 
\begin{align}
\label{ini}
\big|\int _{0}^{\rho^2/4t}\!
\frac{\e^{-z}}{\sqrt{z}}\,P_n(b_0)\,dz\big|
\le  k_n t^{{-\frac 12+\delta_0}},
\end{align} 
 which together with \eqref{rot} and \eqref{Cnest} concludes the proof of \eqref{cnt}.
 
It remains to show that $R_N$ satisfies \eqref{RN}. Consider first the case 
$|\theta| > 20$. From \eqref{q} and  \eqref{QN}, it is enough in this case  to show that 
\begin{equation}
\label{rest}
\int ^{\infty}_{\rho^2/4t}\,
\frac{\e^{-(z+\rho^2/4t)}}{\sqrt{z+\rho^2/4t}}\frac{(4b^2+\pi ^2)^{\frac N2+1}}{2^{N-1}}dz\le k_N,
\end{equation}
which can be seen by the same procedure as that to estimate 
 \eqref{cnf} (just considering $(4b^2+\pi ^2)^{\frac N2+1} $
instead of $P_n(b)$). 

In the case  $|\theta| \le 20$, 
let us split  the integral in \eqref{qnf} according to $z\in (\rho^2/4t\,,\,\rho^2/4\sqrt t)$, or
$z>\rho^2/4\sqrt t $.
For the latter, observe  that 
\begin{equation}
\label{1bx}
1+bx=\frac{\log\big(4zt/\rho^2+\sqrt{(4zt/\rho^2)^2-1}\big)}{\log {t}}
\ge \frac{\log{\sqrt t}}{\log {t}}= \frac12
\end{equation}
so from   \eqref{QN}, again  a procedure as that to estimate the integral in \eqref{cnf} 
yields that this integral is bounded. 
For the case $z\in (\rho^2/4t\,,\,\rho^2/4\sqrt t)$, 
use the expression for $Q_N$  coming  from \eqref{fcs} and take $0<\delta_1<\frac 14$
\begin{multline}
\label{zpeq}
\Big|\int ^{\rho^2/4 \sqrt t}_{\rho^2/4t}\,
\frac{\e^{-(z+\rho^2/4t)}}{\sqrt{z+\rho^2/4t}}\times
\Big(\frac{1+b\,x}{(1+b\,x)^2+(\theta+\frac{\pi}{2}\,x)^2}
\,+\,\frac{1+b\,x}{(1+b\,x)^2+(\theta-\frac{\pi}{2}\,x)^2}
\\ 
 -\sum _{n=0}^{N}
\frac{(-1)^n\,P_n(b)\,\cos\big((n+1)\beta\big)}
{2^{n-1}(1+\theta^2)^{\frac {n+1}{2}}}\,x^n \Big)\, dz\Big|
\\
 \le 
\int ^{\rho^2/4 \sqrt t}_{\rho^2/4t}
\frac{\e^{-(z+\rho^2/4t)}}{\sqrt{z+\rho^2/4t}}
\big(\frac{2}{1+b\,x}\big)\,dz+\,k_N
\int ^{\rho^2/4 \sqrt t}_{\rho^2/4t}
\frac{\e^{-(z+\rho^2/4t)}}{\sqrt{z+\rho^2/4t}}\sum _{n=0}^{N}
|P_n(b)|dz
\\
\le t^{-\frac 14 +\delta_1}
\end{multline}

To obtain the last inequality above, start by estimating the first integral in the previous line. Observe that
 $1+b\,x=\frac{\vararch\, (4zt/\rho^2)}{\log t}$, and change variables $y=\vararch(\frac{4zt}{\rho^2})$
 to obtain:  
\begin{multline}
\label{up}
\int ^{\rho^2/4 \sqrt t}_{\rho^2/4t}\,
\frac{\e^{-(z+\rho^2/4t)}}{\sqrt{z+\rho^2/4t}}
\frac{1}{(1+b\,x)}\,dz
\le 
\log t\int ^{\rho^2/4 \sqrt t}_{\rho^2/4t}\,
\frac{1}{\sqrt{z}\,\vararch\, (\frac{4zt}{\rho^2})}dz
\\
=\frac{\rho\,\log t}{\sqrt t}\big(\int_0^1 \frac{\varsh y}{y\sqrt{\varch y}}\,dy\,+\,
\int_1^{\vararch(\sqrt t)} \frac{\varsh y}{y\sqrt{\varch y}}\,dy\big)\le
\frac{\rho\,\log t}{\sqrt t}\big(k_1+k_2\,t^{\frac 14}\big) 
\end{multline}
The last integral in \eqref{zpeq} is bounded by
$t^{-\frac 14 +\delta_1}$, as can be seen from estimates similar to those used to estimate
\eqref{cnf},  what finishes the proof of 
Theorem \ref{th1}. 
\qed 

\bigskip

\noindent{\it Proof of Theorem \ref{th2}}\quad
Recall that $\rho=1$ and $x=1/\log\sqrt t$. From \eqref{f2} and \eqref{defb}, we have 
\begin{align}
\nonumber
&\log (\sqrt t)\,f(\theta,t)
\,=
\\
\nonumber
&\quad \frac{\log (\sqrt t)}{\sqrt{2\pi t}}\,\e^{-\frac{1}{4 t}(1-\cos 2\theta)}
         \cos{\theta}\,
         \id _{(-\pi/2,\pi/2)}(\theta)\,+\,
 \frac{1}{2\pi\sqrt\pi}\,\int ^{\infty}_{1/4t}\,
\frac{\e^{-(z+1/4t)}}{\sqrt{z+1/4t}}
\\
\label{lff}
&\times\,\Big(\frac{1+bx}{(1+bx)^2+(\theta+\frac{\pi}{2})^2x^2}
\,+\,\frac{1+bx}{(1+bx)^2+(\theta-\frac{\pi}{2})^2x^2}
\Big)\, dz.
\end{align}
Let us denote by $k$ positive constants that may change from line to line, 
and split the integral above according to $z\in (1/4t,1/4\sqrt t)$ or $z> 1/4\sqrt t$. 
Proceeding as in the estimation of  \eqref{up},  it follows that
\begin{align}
\nonumber
&\frac{\log (\sqrt t)}{\sqrt{2\pi t}}\,\e^{-\frac{1}{4 t}(1-\cos 2\theta)}
         \cos{\theta}\,
         \id _{(-\pi/2,\pi/2)}(\theta)\,+\,
 \frac{1}{2\pi\sqrt\pi}\,\int ^{1/4\sqrt t}_{1/4t}\,
\frac{\e^{-(z+1/4t)}}{\sqrt{z+1/4t}}\,\times 
\\ 
\label{K1}
&\Big(\frac{1+bx}{(1+bx)^2+(\theta+\frac{\pi}{2})^2x^2}
\,+\,\frac{1+bx}{(1+bx)^2+(\theta-\frac{\pi}{2})^2x^2}
\Big)\, dz\le k\,t^{-\frac 14}
\end{align} 

 Expanding the fractions in \eqref{lff} with the aid of Lemma \ref{gexp}, 
we have
\begin{equation} 
\label{F3}
\frac{1+bx}{(1+bx)^2+(\theta\pm\frac{\pi}{2})^2x^2}=\sum_{n=0}^N x^nA_n(b,\pm\theta)\,+T_N^{\pm}
\end{equation}
where  $T_N^{-}$ is  $T_N^{+}$ evaluated at  $-\theta$, and, for   $d=b^2+(\theta+\frac \pi 2)^2$, 
\begin{align}
\label{An}
&A_n(b,\theta)=(-1)^n\Re \big(b+i(\frac \pi 2+\theta)\big)^n
\\ \label{Tn}
&T_N^{+}=-x^{N+1}\,\frac{dA_{N-1}+2bA_N+xdA_N}{(1+bx)^2+(\theta+\frac{\pi}{2})^2x^2}
\end{align}
From  \eqref{An}, it is easy to see that 
\begin{equation}
\label{AAn}
|A_n(b,\pm\theta)|\le \big(b^2+(\frac \pi 2\pm\theta)^2\big)^{\frac n2}. 
\end{equation}
For   $z>\frac{1}{1/4\sqrt t}$ the denominators in $T_N^{\pm}$ are 
$>(\frac 12)^2$, as follows from \eqref{1bx}. Therefore
\begin{equation}
\nonumber
%\label{estT}
|T_N^{\pm}|\le x^{N+1}\,k\,\big(b^2+(\frac \pi 2\pm\theta)^2\big)^{\frac N2 +1},
\end{equation}
 and proceeding as in the estimation of \eqref{rest}, we obtain 
\begin{align}
\label{itn}
 &\frac{1}{2\pi\sqrt\pi}\,\int ^{\infty}_{1/4\sqrt t}\,
\frac{\e^{-(z+1/4t)}}{\sqrt{z+1/4t}}\,\big(
T_N^{-}+T_N^{+}\big)\, dz\le k_N \frac{1+|\theta |^{N+2}}{(\log \sqrt t)^{N+1}}
\end{align}
On the other hand,  from \eqref{An}, 
it is easy to see that 
\begin{equation}
\label{dAn}
|A_n(b,\pm \theta)-A_n(b_0,\pm \theta)|\le |b-b_0|\,k_n\,\big(|b_0|^n+|\theta|^n\,+\,1\big)
\end{equation}

Next, set 
\begin{equation}
\nonumber
g_n=\int ^{\infty}_{0}\,
\frac{\e^{-z}}{\sqrt{z}}\, \big((A_n(b_0,\theta)\,+A_n(b_0,-\theta)\big)\,dz,
\end{equation}
which from  \eqref{An} is precisely \eqref{gn}. Note that the ${g_n}'s$
 are polynomials in $\theta$ of degree $n$. 
To finish the proof, from \eqref{lff}, \eqref{K1}, \eqref{F3} and \eqref{itn}
it is enough  to show that for $\delta_1<\frac 14$, 
\begin{multline}
\int ^{\infty}_{1/4\sqrt t}\frac{\big(A_n(b,\theta)+A_n(b,-\theta)\big)\,\e^{-(z+1/4t)}}{\sqrt{z+1/4t}}
\,-\frac{\big(A_n(b_0,\theta)+A_n(b_0,-\theta)\big)\,\e^{-z}}{\sqrt{z}}\,dz\,
\\
-\int _{0}^{\rho^2/4t}
\frac{\big(A_n(b_0,\theta)+A_n(b_0,-\theta)\big)\,\e^{-z}}{\sqrt{z}}\,dz\le k_n 
(1+|\theta|^n)t^{-\frac14 +\delta_1},
\end{multline} 
what follows from estimates similar to those in \eqref{rot} and \eqref{ini}, with the aid of \eqref{AAn} and \eqref{dAn}. 
\qed

\bigskip
 
\bibliography{winda}{}

\begin{small}
Stella Brassesco $^{(1)}$ \hskip 2cm  Silvana C. Garc\'\i a Pire $^{(2)}$ 
\\ \texttt{sbrasses@ivic.gob.ve}\hskip 1.5cm \texttt{sgarcia@ivic.gob.ve}
\\ $^{(1)}$ Departamento de Matem\'aticas,
    Instituto Venezolano de Investigaciones Cient\'{\i}ficas, Apartado Postal
    20632 Caracas
    1020--A, Venezuela
    \\$^{(2)}$ Universidad Nacional Experimental Sim\'on Rodriguez,  N\'ucleo Araure
    Av. 13 de Junio con calle 5, Edif. Araure, Estado Portuguesa, Venezuela. 
    \end{small}

\end{document}